\theoremstyle{plain}
\numberwithin{equation}{section}
\newtheorem{theorem}{Theorem}[section]
\newtheorem{example}{Example}[section]
\newtheorem{definition}{Definition}[section]
\date{}
\title{\textbf{T-Controllability of Evolution Systems having Non-instantaneous Impulses}}
\author{$^*$Vishant Shah$^1$\thanks{ Corrorsponding Author}, Kalpesh Bharvad$^2$, Dipsha Bhadrecha$^3$, Aashi Maratha$^3$, Jaita Sharma$^4$, Dhanesh Patel$^5$\\
	Department of Applied Mathematics, The M. S. University of Baroda, Vadodara, India-390001\\

Email: vishantmsu83@gmail.com, kalpeshbharwad291@gmail.com, dipubhadrecha@gmail.com, aashimaratha@gmail.com, jaita.sharma-appmath@msubaroda.ac.in, dhanesh.patel-appmath@msubaroda.ac.in}
\begin{document}
	\maketitle
\begin{abstract}
In this manuscript, we have considered the system governed by a non-instantaneous impulsive dynamical system of integer ordered with classical and non-local conditions and derived sufficient conditions for the trajectory controllability of the system on the Banach space. The conditions were obtained through the concept of semi-group properties of operators and Gronwall's inequality. Finally, illustrations with the classical and non-local conditions were also added to validate the derived conditions.
\end{abstract}
\begin{keywords}
Dynamical systems, Non-instantaneous impulses, Trajectory Controllability, Classical conditions, Non-local conditions
\end{keywords}

\begin{AMS}
	34K35, 34K45, 93B05, 93C25.
\end{AMS}

\section{Introduction}
Impulsive differential equations play a very important role in studying the behavior of the phenomenon having abrupt changes in the physical problems. If the changes are at a particular time moment then it is called instantaneous impulsive differential equations and the changes are over the small intervals then it is called non-instantaneous impulsive differential equations. There is a wide range of applications of these impulsive evolution equations in all the fields of sciences namely, physical sciences, biological science, environmental sciences. These applications are found in the monograph \cite{Laxmi,Samko,Agarwal} and research articles \cite{Bainov,Erbe,Kirane,Joelianto,Dishlieva} and references their in.Qualitative properties like existence and uniqueness of solution and continuity of the solutions of instantaneous and non-instantaneous differential or integro-differential or evolution equations are found in research articles with initial conditions are found in research articles \cite{Milman,Rogovchenko,Rogovchenko1,Liu,Yang,Xiang,Anguraj,Zhang,Sattayatham,Liang,Fan,Wen,Tang,Chen,Chen1,Chen2,Cardinali,Shah,Hernandez,Sood} and references their in.

Controlllability is one of the important qualitative property of the systems in which, one has to find the controllers which drives the system from given initial state to desired final state or nearer to to the desired final state. The notion of the modern control theory for the finite dimensional linear system was first introduced by Kalmann. He used the concept of state space analysis to find the controller for the finite dimensional linear systems. Study of complete or exact cotrollability for the linear, semi-linear and non-linear finite and infinite dimensional systems using various techniques of operator theory is found in the monographs \cite{Russel,SED,BRW} and  articles \cite{Joshi,Klamka,Li, George,Klamka1,Klamka2} and reference their in. In the concept of complete or exact controllability for the system one has to find the controller which steers the system from given initial state to desire final state. But this type of controller may not be cost effective. To overcome this situation one has to find a controller which steers the system from given initial state to desired final state according to our requirements and study of finding a controller which steers the system to desired final state is called Trajectory controllability. The concept of trajectory controllability for a finite dimensional system was introduced by George \cite{George1} and extended to infinite dimensional system by Chalishajar, et. al. \cite{Chalishajar} using the concept of operator semi-group and fixed point theorems. The notion of trajectory controllability of semi-linear system with non-instantaneous impulses with classical and non-local conditions using the operator semi-group and Gronwall's inequality found in the article \cite{Vishant}. 

This manuscript established the trajectory controllability of the non-instantaneous system:
\begin{equation}
	\begin{aligned}
		x'(\tau)&=A(\tau)x(\tau)+F(\tau,x(\tau))+W_1(\tau), &\quad \tau \in [s_k,t_k+1),&\ for\ all\  k=0,1,2\cdots,p\\
		x'(\tau)&=A(\tau)x(\tau)+G(\tau,x(\tau))+W_2(\tau),&\quad \tau \in [t_k, s_k),&\ for\ all\ k=1,2,\cdots, p,  
	\end{aligned}\label{intro1}
\end{equation}
with local condition $x(0)=x_0$ and non-local condition $x(0)= x_0-h(x)$. 

The structure of the article is as follows:
\begin{itemize}
    \item [1.] Definitions of various controllability are discussed in section \eqref{prelim}.
    \item [2.] Section \eqref{classical} discussed the trajectory controlllability for the instantaneous impulsive system with classical conditions.
    \item[3.] Trajectory controllability for the instantaneous system with non-local conditions is discussed in the section \eqref{Nonlocal}.
    \item[4.] Finally, conclusion for the work is found in section \eqref{conclusion}
\end{itemize}

\section{Preliminaries}\label{prelim}
This section discusses definitions and prepositions to established trajectory controllability of the system governed by non-instantaneous impulsive evolution equation with classical as well nonlocal conditions.
\begin{definition}
The system \eqref{intro1} is completely controllable on the interval $\mathcal{J}=[0,T_0]$ if for any $x_0, x_1\in \mathcal{X}$, if there exist a control $W(\cdot)$n in $\mathcal{U}$ (control space) steers the system from $x_0$ at $\tau=0$ to $x_1$ at $\tau=T_0$. 	
\end{definition}
In the definition of complete controllablity, there is no information of the path or trajectory on which the given system to be driven. Sometimes this leads to high cost So to overcome this situation we select the path or trajectory (having minimum cost) under which control system drives from $x_0$ to $x_1$ over the interval $\mathcal{J}$. Searching of controller $W(\cdot)$ in a way that the system drive from $x_0$ to $x_1$ over the interval is called trajectory controllability of the system. Therefore, trajectory controllability of the system is strongest amongst all types of controllability.
\begin{definition}
Let, $\mathcal{C}_{\mathcal{T}}$ be the set of all trajectories under which the system \eqref{intro1} drives from $x_0$ to $x_1$ over the interval $\mathcal{J}$. The system \eqref{intro1} is trajectory controllable if for any $z\in \mathcal{C}_{\mathcal{T}}$, there is a controller $W(\cdot)\in \mathcal{U}$ such that state of the system $x(\tau)$ drives on prescribed trajectory $z(\tau)$. This means $x(\tau)=z(\tau)$ a.e. over the interval $\mathcal{J}$.    	
\end{definition}
	  
\section{T-controllability with classical conditions} \label{classical}
Consider the system governed by the non-instantaneous impulsive evolution equation
\begin{equation}
	\begin{aligned}
		x'(\tau)&=A(\tau)x(\tau)+F(\tau,x(\tau))+W_1(\tau),\quad \tau \in [s_k,t_{k+1})\\
		x'(\tau)&=A(\tau)x(\tau)+G(\tau,x(\tau))+W_2(\tau), \quad \tau \in [t_k,s_k)\\
		x(0)&=x_0
	\end{aligned}\label{class1}
\end{equation}
over the interval $[0, T_0]$. Here, $x(\tau)$ is the state of the system lies in Banach space $\mathcal{X}$ at any time $\tau\in [0, T_0]$, $A(\tau)$ at any time $\tau$ is a linear operator on the Banach space $\mathcal{X}$, $F, G:[0,T_0]\times \mathcal{X}\rightarrow \mathcal{X}$ are nonlinear functions. 

To discuss trajectory controllability of the system governed by non-instantaneous impulsive evolution equation \eqref{class1}, we have following theorem:
\begin{theorem}
	If,
	\begin{itemize}
		\item [(A1)] Linear operator $A$ in the system \eqref{class1} infinitesimal generator of $C_0$ semigroup.
		\item[(A2)] The non-linear map $F:[0,T_0]\times \mathcal{X}\rightarrow \mathcal{X}$ is continuous such that there exist a non-decreasing function $l_F:\mathbb{R}_+\rightarrow \mathbb{R}+$ and positive real number $r_0$ satisfying
		\[||F(\tau,x_1)-F(\tau,x_2)||\leq l_F(r)||x_1-x_2||,\] for all $\tau \in [0,T_0]$, $x_1,x_2\in B_r(\mathcal{X})$ and $r\leq r_0$.
		\item[(A3)] The non-linear map $G:[0,T_0]\times \mathcal{X}\rightarrow \mathcal{X}$ is continuous such that there exist a non-decreasing function $l_G:\mathbb{R}_+\rightarrow \mathbb{R}+$ and positive real number $r_0$ satisfying
		\[||G(\tau,x_1)-G(\tau,x_2)||\leq l_G(r)||x_1-x_2||,\] for all $\tau \in [0,T_0]$, $x_1,x_2\in B_r(\mathcal{X})$ and $r\leq r_0$.
	\end{itemize}
Then, the system \eqref{class1} is trajectory controllable over the interval $[0,T_0]$.
\end{theorem}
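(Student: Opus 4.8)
The plan is to exploit the defining feature of trajectory controllability: rather than solving for a control that merely reaches the endpoint, I construct a control that forces the prescribed trajectory itself to be a solution, and then invoke uniqueness to conclude that the state coincides with it. Fix an arbitrary admissible trajectory $z \in \mathcal{C}_{\mathcal{T}}$; by definition $z$ is absolutely continuous on $[0,T_0]$, satisfies $z(0) = x_0$, and respects the switching structure of \eqref{class1}. First I would define the steering control piecewise by setting, on each interval $[s_k, t_{k+1})$,
\[
W_1(\tau) = z'(\tau) - A(\tau)z(\tau) - F(\tau, z(\tau)),
\]
and, on each interval $[t_k, s_k)$,
\[
W_2(\tau) = z'(\tau) - A(\tau)z(\tau) - G(\tau, z(\tau)).
\]
Both quantities lie in the admissible control space $\mathcal{U}$ because $z'$, $Az$, and the continuous maps $F,G$ are integrable on the compact interval. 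With this choice, direct substitution shows that $z$ itself satisfies \eqref{class1} with control $W=(W_1,W_2)$.

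Next I would pass to the mild (integral) formulation. Using (A1), let $\{T(\tau)\}$ denote the $C_0$-semigroup (evolution family) generated by $A$ and set $M = \sup_{\tau \in [0,T_0]} \|T(\tau)\| < \infty$. On the first subinterval $[s_0, t_1)$ both the actual state $x$ and the target $z$ satisfy
\[
x(\tau) = T(\tau - s_0)x(s_0) + \int_{s_0}^{\tau} T(\tau - \eta)\bigl[F(\eta, x(\eta)) + W_1(\eta)\bigr]\, d\eta,
\]
together with the analogous identity for $z$. Subtracting and writing $e(\tau) = x(\tau) - z(\tau)$ eliminates the control terms and leaves, after applying (A2) on a ball $B_r(\mathcal{X})$ containing the relevant states,
\[
\|e(\tau)\| \leq M\|e(s_0)\| + M\, l_F(r) \int_{s_0}^{\tau} \|e(\eta)\|\, d\eta.
\]

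Since $e(s_0) = x(0) - z(0) = x_0 - x_0 = 0$, Gronwall's inequality forces $\|e(\tau)\| = 0$ on $[s_0, t_1)$, hence $x = z$ there. I would then propagate this equality across the switching points: continuity of both $x$ and $z$ gives $e(t_1) = 0$, so the same argument with $G$ and $l_G(r)$ in place of $F$ and $l_F(r)$ yields $e \equiv 0$ on $[t_1, s_1)$, and so on alternately over the finitely many subintervals partitioning $[0, T_0]$. Concatenating, $x(\tau) = z(\tau)$ almost everywhere on $[0,T_0]$, which is precisely trajectory controllability.

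The main obstacle I anticipate is not the Gronwall estimate, which is routine, but the bookkeeping that makes the inductive step legitimate. I must verify that the error genuinely vanishes at each junction $t_k, s_k$ so that each subinterval starts from zero error, and that a single radius $r \leq r_0$ can be fixed for which both $x$ and $z$ remain inside $B_r(\mathcal{X})$ throughout, so that the local Lipschitz constants $l_F(r), l_G(r)$ are uniformly valid. Establishing this a priori bound --- typically via continuity of $z$ on the compact interval together with an a priori estimate on $x$ --- is the delicate point; once it is secured, the alternating Gronwall argument closes without further difficulty.
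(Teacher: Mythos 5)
Your proposal is correct and follows essentially the same route as the paper: choose the feedforward control $W=z'-Az-F(\cdot,z)$ (resp.\ with $G$) on each subinterval, subtract the mild formulations, apply the local Lipschitz bounds and Gronwall's inequality to force the error to vanish, and propagate the zero error across the finitely many switching points. Your closing remark about fixing a single radius $r\le r_0$ so that both $x$ and $z$ stay in $B_r(\mathcal{X})$ is a point the paper passes over silently, so your version is, if anything, slightly more careful.
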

 \begin{proof}
 Let $u(\tau)$ be any trajectory in $\mathcal{C}_{\mathcal{\tau}}$ satisfying $x(t^{+}_k)=u(t^{+}_k)$ along which the system \eqref{class1} steered from initial state $x_0$ at $\tau=0$ to desired final state $x_1$ at $\tau=T_0$.
 
 Over the interval $[0, t_1)$, the system \eqref{class1} becomes:
 \begin{equation}
     \begin{aligned}
      x'(\tau)&=A(\tau)x(\tau)+F(\tau,x(\tau))+W_1(\tau)\\
      x(0)&=x_0 
     \end{aligned}\label{class2}
 \end{equation}
 Consider \[ W_1(\tau)=u'(\tau)-A(\tau)-F(\tau,u(\tau))\] over the interval $[0,t_1)$ in and plugging it in the \eqref{class2} the system \eqref{class2} becomes
 \[x'(\tau)=A(\tau)x(\tau)+F(\tau,x(\tau))+u'(\tau)-A(\tau)-F(\tau,u(\tau))\] with initial condition $x(0)-u(0)=0$.\\
 Choosing variable $z=x-u$ the equation system reduces to
 \begin{equation}
     \begin{aligned}
      z'(\tau)&=A(\tau)z(\tau)+F(\tau,x(\tau))-F(\tau,u(\tau))\\
      z(0)&=0
     \end{aligned}\label{class3}
 \end{equation}
 and problem of trajectory controllability of the system \eqref{class2} is reduced to the solvability of the system \eqref{class3} over the interval $[0,t_1)$.
 The mild solution of the system \eqref{class3} is given by:
 \begin{equation}
     z(\tau)=\int^{\tau}_{0} \mathcal{T}(\tau-\zeta)[F(\zeta,x(\zeta))-F(\zeta,u(\zeta))]d\zeta \label{class4}
 \end{equation}
 where, $\mathcal{T}(\tau)$ is $C_0$ semi-group generated by linear operator $A$ satisfying $||\mathcal{T}(\tau)||\leq M$ for some positive number $M$.\\
 Therefore,
 \begin{equation*}
     \begin{aligned}
      ||z(\tau)||&\leq \int^{\tau}_{0} ||\mathcal{T}(\tau-\zeta)||\ ||[F(\zeta,x(\zeta))-F(\zeta,u(\zeta))]||d\zeta\\
      &\leq M \int^{\tau}_{0} l_F(r) ||x(\zeta)-u(\zeta)|| d\zeta\\
      &\leq M \int^{\tau}_{0} l_F(r) ||z(\zeta)||d\zeta
     \end{aligned}
 \end{equation*}
  and using Gronwall's inequality, we obtain $z(\tau)=0$ over the interval $[0,t_1)$. Hence, $x(\tau)=u(\tau)$ for all $\tau \in [0, t_1)$. Therefore, the system is trajectory controllable over the interval $[0,t_1)$.
  
  Over the interval $[t_1, s_1)$, the system becomes:
 \begin{equation}
 \begin{aligned}
   x'(\tau)&=A(\tau)x(\tau)+G(\tau,x(\tau))+W_2(\tau),\\
   x(t_1)&=u(t_1)
   \end{aligned}
   \label{class5}
 \end{equation}
 as the state of the system satisfies $x(t_k)=u(t_k)$.
  Plugging the control $W_2(\tau)=u'(\tau)-A(\tau)u(\tau)-G(\tau,x(\tau))$ over the interval $[t_1,s_1)$ in the system \eqref{class5} the system becomes:
 \[ x'(\tau)-u'(\tau)=G(\tau,x(\tau))-G(\tau,u(\tau))\]
 Choosing $z(\tau)=x(\tau)-u(\tau)$ we obtain
 \[ z'(\tau)= G(\tau,x(\tau))-G(\tau,u(\tau))\]
 as the value of the $z$ at $\tau=t_1$ is zero.
 Therefore, we have
 \begin{equation*}
 \begin{aligned}
||z(\tau)||&\leq \int^{\tau}_{t_1} ||\mathcal{T}(\tau-\zeta)||\ ||[G(\zeta,x(\zeta))-G(\zeta,u(\zeta))]||d\zeta\\
&\leq M \int^{\tau}_{t_1} l_G(r) ||x(\zeta)-u(\zeta)|| d\zeta\\
      &\leq M \int^{\tau}_{t_1} l_G(r) ||z(\zeta)||d\zeta
\end{aligned}
 \end{equation*}
 using (A3) and Gronwall's inequality we obtain $z(\tau)=0$ for all $\tau \in [t_1,s_1)$  Therefore, system \eqref{class1} is T-Controllable over the interval $[t_1,s_1)$. Moreover, $z(s_1)=0$ as $G$ is continuous. 
 
 Continuing this process over the interval $[s_k, t_{k+1})$ the system \eqref{class1} becomes:
 \begin{equation}
     \begin{aligned}
      x'(\tau)&=A(\tau)x(\tau)+F(\tau,x(\tau))+W_1(\tau)\\
      x(s_k)&=u(s_k) 
     \end{aligned}\label{class6}
 \end{equation}
 Choose the control over the interval $[s_k,t_{k+1}$ as:
 \[ W_1(\tau)=u'(\tau)-A(\tau)-F(\tau,u(\tau))\] and plugging it in the equation \eqref{class6} we get,
 \[x'(\tau)=A(\tau)x(\tau)+F(\tau,x(\tau))+u'(\tau)-A(\tau)-F(\tau,u(\tau))\]
 considering $z(\tau)=x(\tau)-u(\tau)$, above expression becomes:
 \begin{equation}
 \begin{aligned}
      z'(\tau)&=A(\tau)z(\tau)+F(\tau,x(\tau))-F(\tau,u(\tau))\\
      z(s_k)&=0,
     \end{aligned}\label{class7}
 \end{equation}
 Therefore
 \begin{equation*}
     \begin{aligned}
      ||z(\tau)||&\leq \int^{\tau}_{s_k} ||\mathcal{T}(\tau-\zeta)||\ ||[F(\zeta,x(\zeta))-F(\zeta,u(\zeta))]||d\zeta\\
      &\leq M \int^{\tau}_{0} l_F(r) ||x(\zeta)-u(\zeta)|| d\zeta\\
      &\leq M \int^{\tau}_{0} l_F(r) ||z(\zeta)||d\zeta
     \end{aligned}
 \end{equation*}
 and using Gronwall's inequality, we obtain $z(\tau)=0$ over the interval $[s_k,t_{k+1})$. Hence, $x(\tau)=u(\tau)$ for all $\tau \in [s_k,t_{k+1})$. Therefore, the system is T- controllable over the interval $[s_k,t_{k+1})$.
 
 Over the interval $[t_k,s_k)$ the system becomes:
 \begin{equation}
 \begin{aligned}
   x'(\tau)&=A(\tau)x(\tau)+G(\tau,x(\tau))+W_2(\tau),\\
   x(t_k)&=u(t_k)
   \end{aligned}
   \label{class8}
 \end{equation}
 Plugging the control $W_2(\tau)=u'(\tau)-A(\tau)u(\tau)-G(\tau,x(\tau))$ over the interval $[t_k,s_k)$ in the system \eqref{class8} the system becomes:
 \[ x'(\tau)-u'(\tau)=G(\tau,x(\tau))-G(\tau,u(\tau))\]
 Choosing $z(\tau)=x(\tau)-u(\tau)$ we obtain
 \[ z'(\tau)= G(\tau,x(\tau))-G(\tau,u(\tau))\]
 as the value of the $z$ at $\tau=t_k$ is zero.
 Therefore, we have
 \begin{equation*}
 \begin{aligned}
||z(\tau)||&\leq \int^{\tau}_{t_k} ||\mathcal{T}(\tau-\zeta)||\ ||[G(\zeta,x(\zeta))-G(\zeta,u(\zeta))]||d\zeta\\
&\leq M \int^{\tau}_{t_k} l_G(r) ||x(\zeta)-u(\zeta)|| d\zeta\\
      &\leq M \int^{\tau}_{t_k} l_G(r) ||z(\zeta)||d\zeta
\end{aligned}
 \end{equation*}
 using (A3) and Gronwall's inequality we obtain $z(\tau)=0$ for all $\tau \in [t_k,s_k)$  Therefore, system \eqref{class1} is T-Controllable over the interval $[t_k,s_k)$.
 
 Since, the system is T-controllable over the intervals $[0,t_1),[t_1,s_1), [s_k,t_{k+1})$ and $[t_k,s_k)$ for all $k$. Hence, the system is controllable over entire interval $[0,T_0]$. This completes the proof of the theorem.
  \end{proof} 
 \begin{example}
 Let, $\mathcal{X}=L^2([0,\pi],\mathbb{R})$ and consider the system governed by non-instantaneous impulsive evolution equation:
 \begin{equation}
     \begin{aligned}
      \frac{\partial H(\tau,\psi)}{\partial \tau}&=\partial^{2}_{\psi} H(\tau,\psi)+F(\tau, H(\tau,\psi))+w_1(\tau,\psi)&\quad \tau \in [0,1/3)\cup [2/3,1], \\
      \frac{\partial H(\tau,\psi)}{\partial \tau}&=\partial^{2}_{\psi} H(\tau,\psi)+G(\tau,H(\tau,\psi))+w_2(\tau,\psi)&\quad \tau \in [1/3,2/3),\\
      H(\tau,0)&=0 \quad H(\tau, \pi)=0 & \quad \tau>0,\\
      H(0, \psi)&=H_0(\psi) \quad & \quad 0<\psi<\pi,
     \end{aligned}\label{example1}
 \end{equation}
 over the interval $[0,1]$.\\
 Defining the operator on the space $\mathcal{X}$ as $A(\tau)=\partial^{2}_{\psi}$, $A(\tau)$ is the infinitesimal generator of the $C_0$ semigroup $\mathcal{T}(\tau)$. The representation of $\mathcal{T}(\tau)$ is \[\mathcal{T}(\tau)z=\sum^{\infty}_{m=0} \exp(\mu_m\tau)<z,\phi_m> \phi_m\]
 where, $\phi_m=\sqrt{2}sin(n\psi)$ for all $m=1,2,\cdots$ is the orthonormal basis corresponding to eigenvalue $\mu_m=-m^2$ of the operator $A$.
 
 With this concept the equation \eqref{example1} can be rewritten as and abstract equation on the space $\mathcal{X}$ as
 \begin{equation}
    \begin{aligned}
     x'(\tau)&=A(\tau)+F(\tau,x)+W_1(\tau) &\quad  \tau \in [0,1/3)\cup [2/3,1],\\
     x'(\tau)&=A(\tau)+F(\tau,x)+W_2(\tau) &\quad \tau \in [1/3,2/3),\\
     x(0)&=x_0, &\quad
     \end{aligned}\label{example2}
 \end{equation}
 where, $x(\tau)=H(\tau,\cdot), W_1(\tau)=w_1(\tau,\psi)$ and $W_2(\tau)=w_2(\tau,\psi)$.
 The system \eqref{example2} is trajectory controllable over the interval $[0,1]$ if the functions $F$ and $G$ satisfies the hypotheses of the theorem. 
 \end{example}
 
 \section{T-controllability with non-local conditions}\label{Nonlocal}
 Consider the system governed by the non-instantaneous impulsive evolution equation
\begin{equation}
	\begin{aligned}
		x'(\tau)&=A(\tau)x(\tau)+F(\tau,x(\tau))+W_1(\tau),\quad \tau \in [s_k,t_{k+1})\\
	x'(\tau)&=A(\tau)x(\tau)+G(\tau,x(\tau))+W_2(\tau), \quad \tau \in [t_k,s_k)\\
		x(0)&=h(x)
	\end{aligned}\label{nonlocal1}
\end{equation}
over the interval $[0, T_0]$. Here, $x(\tau)$ is the state of the system lies in Banach space $\mathcal{X}$ at any time $\tau\in [0, T_0]$, $A(\tau)$ at any time $\tau$ is a linear operator on the Banach space $\mathcal{X}$, $F, G:[0,T_0]\times \mathcal{X}\rightarrow \mathcal{X}$ are nonlinear functions and $h:\mathcal{X}\rightarrow \mathcal{X}$ is the operator representing the non-local conditions.
The mild solution of the equation \eqref{nonlocal1} is given by 
\begin{equation}
    x(\tau)= \begin{cases}\begin{aligned}
   &\mathcal{T}(\tau)h(x)+\int^{\tau}_{0}\mathcal{T}(\tau-\zeta) \big[F(\zeta,x(\zeta))+W_1(\zeta)\big] d\zeta, &\quad \tau \in [0,t_1)\\
   &\mathcal{T}(\tau)x(t_k)+\int^{\tau}_{t_k}\mathcal{T}(\tau-\zeta) \big[G(\zeta,x(\zeta))+W_2(\zeta)\big] d\zeta,, & \quad \tau \in [t_k,s_k)\\
   &\mathcal{T}(\tau)x(s_k)+\int^{\tau}_{s_k}\mathcal{T}(\tau-\zeta) \big[F(\zeta,x(\zeta))+W_1(\zeta)\big] d\zeta, &\quad \tau \in [s_k,t_{k+1}),\\
   \end{aligned}
    \end{cases}\label{nonlocal2}
\end{equation}
where, $\mathcal{T}(\tau)$ is semigroup generated by the linear operator $A(\tau)$.

The following theorem discusses the trajectory controllability of the system governed by the equation \eqref{nonlocal1}.
\begin{theorem}
If,
\begin{itemize}
		\item [(A1)] Linear operator $A$ in the system \eqref{class1} infinitesimal generator of $C_0$ semigroup.
		\item[(A2)] The non-linear map $F:[0,T_0]\times \mathcal{X}\rightarrow \mathcal{X}$ is continuous such that there exist a non-decreasing function $l_F:\mathbb{R}_+\rightarrow \mathbb{R}+$ and positive real number $r_0$ satisfying
		\[||F(\tau,x_1)-F(\tau,x_2)||\leq l_F(r)||x_1-x_2||\], for all $\tau \in [0,T_0]$, $x_1,x_2\in B_r(\mathcal{X})$ and $r\leq r_0$.
		\item[(A3)] The non-linear map $G:[0,T_0]\times \mathcal{X}\rightarrow \mathcal{X}$ is continuous such that there exist a non-decreasing function $l_G:\mathbb{R}_+\rightarrow \mathbb{R}+$ and positive real number $r_0$ satisfying
		\[||G(\tau,x_1)-G(\tau,x_2)||\leq l_G(r)||x_1-x_2||\], for all $\tau \in [0,T_0]$, $x_1,x_2\in B_r(\mathcal{X})$ and $r\leq r_0$.
		\item[(A4)] The function $h:\mathcal{X}\rightarrow \mathcal{X}$ is Lipchitz continuous with Lipchitz constant $0\leq l_h\leq 1$.
	\end{itemize}
Then, the system \eqref{nonlocal1} is trajectory controllable over the interval $[0,T_0]$.
\end{theorem}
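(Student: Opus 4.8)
The plan is to follow the classical-conditions theorem almost verbatim, since (A1)--(A3) are unchanged, and to isolate the single new ingredient contributed by the non-local condition. For an arbitrary admissible trajectory $u\in\mathcal{C}_{\mathcal{T}}$ I would use exactly the same feedback controls as before, namely $W_1(\tau)=u'(\tau)-A(\tau)u(\tau)-F(\tau,u(\tau))$ on the $F$-intervals $[s_k,t_{k+1})$ and $W_2(\tau)=u'(\tau)-A(\tau)u(\tau)-G(\tau,u(\tau))$ on the $G$-intervals $[t_k,s_k)$. Writing $z=x-u$, these controls make $z$ satisfy the same $F$- or $G$-driven homogeneous equation on every subinterval as in the previous theorem, and the mild solution \eqref{nonlocal2} is continuous across each impulse instant $t_k,s_k$, so no jump in $z$ is introduced there. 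The only place the two proofs differ is at $\tau=0$: here the non-local condition gives $z(0)=x(0)-u(0)=h(x)-h(u)$ instead of $z(0)=0$, because admissibility of $u$ forces $u(0)=h(u)$.

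Consequently I would recast the claim as a uniqueness statement: with the feedback controls inserted, the closed-loop non-local initial-value problem admits $u$ as a mild solution (a one-line check), and proving $x=u$ is the same as proving that this non-local mild solution is unique. To do so I would assemble the global mild-solution representation of $z$ from \eqref{nonlocal2}, estimate using (A2)--(A3) exactly as in the classical proof, and carry the single constant $z(0)=h(x)-h(u)$ through the successive intervals. This yields a Gronwall inequality of the form
\[
\|z(\tau)\|\le K\,\|h(x)-h(u)\| + M\,\ell(r)\int_0^{\tau}\|z(\zeta)\|\,d\zeta,\qquad \ell(r)=\max\{l_F(r),l_G(r)\},
\]
and after integrating and taking the supremum over $[0,T_0]$ I obtain $\|z\|_\infty\le K'\,\|h(x)-h(u)\|$ for a constant $K'$ depending only on $M$, $\ell(r)$, $T_0$ and the number of impulses. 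Handling a general $h$ (which may sample the trajectory at instants in later subintervals) is precisely why I would run this estimate globally rather than interval by interval.

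Finally I would invoke (A4): since $h$ is Lipschitz, $\|h(x)-h(u)\|\le l_h\,\|x-u\|_\infty=l_h\,\|z\|_\infty$, so the two estimates close up into $\|z\|_\infty\le K' l_h\,\|z\|_\infty$, which forces $z\equiv0$ and hence $x=u$ on $[0,T_0]$ as soon as $K'l_h<1$. I expect this last inequality to be the main obstacle and the place where (A4) genuinely does its work. The bound $l_h\le1$ alone only suffices once $K'\le1$, which in turn requires the $C_0$-semigroup to be normalized as a contraction ($M=1$) together with careful control of how the constant is amplified across the $2p$ impulse transitions; with a general bound $\|\mathcal{T}(\tau)\|\le M$ one really needs the sharper quantitative condition $K'l_h<1$. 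I would make $K'$ as small as possible either by restricting to contraction semigroups or by replacing the sup-norm with a Bielecki-type weighted norm $\|z\|_\lambda=\sup_\tau e^{-\lambda\tau}\|z(\tau)\|$, which damps the integral term and pushes the effective constant down toward $M l_h$, so that (A4) can close the argument. The borderline case $l_h=1$ and the bookkeeping of the impulse-to-impulse amplification are the delicate points I would expect to absorb most of the effort.
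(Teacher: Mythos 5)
Your proposal follows the same core strategy as the paper's proof: identical feedback controls $W_1(\tau)=u'(\tau)-A(\tau)u(\tau)-F(\tau,u(\tau))$ and $W_2(\tau)=u'(\tau)-A(\tau)u(\tau)-G(\tau,u(\tau))$, the substitution $z=x-u$, reduction to showing $z\equiv 0$, and a Gronwall estimate, with the nonlocal condition entering only through $z(0)=h(x)-h(u)$. The difference is in how that one term is handled. The paper stays interval-by-interval: on $[0,t_1)$ it estimates $\|z(\tau)\|\le Ml_h\|z(\tau)\|+M\int_0^\tau l_F(r)\|z(\zeta)\|\,d\zeta$, moves the first term to the left, divides by $1-Ml_h$, applies Gronwall to conclude $z\equiv 0$ on $[0,t_1)$, and then proceeds on the later subintervals exactly as in the classical theorem, the nonlocal term never reappearing. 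Your global sup-norm (or Bielecki-norm) estimate is a legitimate alternative and is actually better adapted to a genuinely nonlocal $h$ such as $h(x)=\sum_i\alpha_i x(\tau_i)$ with sampling instants beyond $t_1$: the paper's pointwise bound $\|h(x)-h(u)\|\le l_h\|x(\tau)-u(\tau)\|$ only makes sense if $h(x)-h(u)$ is controlled by the restriction of $z$ to the first subinterval, which it silently assumes.

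Your closing observation is the substantive one: the argument closes only if $K'l_h<1$ (in the paper's local version, only if $Ml_h<1$), and hypothesis (A4), $0\le l_h\le 1$, does not deliver this when $M\ge 1$, nor in the borderline case $l_h=1$ even when $M=1$. This is not a defect of your proposal relative to the paper --- the paper's division by $1-Ml_h$ presupposes $Ml_h<1$ without stating it --- so the gap you flagged is present in the published argument as well. If you adopt the paper's interval-by-interval scheme, the constant you must beat is just $Ml_h$ rather than the accumulated $K'$, which is the cheapest way to keep the needed extra smallness hypothesis minimal; with your global estimate the Bielecki weight is essentially mandatory to avoid amplification across the $2p$ impulse transitions.
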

\begin{proof}
Let $u(\tau)$ be any trajectory in $\mathcal{C}_{\mathcal{\tau}}$ satisfying $x(t^{+}_k)=u(t^{+}_k)$ along which the system \eqref{nonlocal1} steered from initial state $x(0)=h(x)$ at $\tau=0$ to desired final state $x_1$ at $\tau=T_0$.

Over the interval $[0, t_1)$, the system \eqref{nonlocal1} becomes:
 \begin{equation}
     \begin{aligned}
      x'(\tau)&=A(\tau)x(\tau)+F(\tau,x(\tau))+W(\tau)\\
      x(0)&=h(x) 
     \end{aligned}\label{nonlocal3}
 \end{equation}
 Consider \[ W(\tau)=u'(\tau)-A(\tau)-F(\tau,u(\tau))\] over the interval $[0,t_1)$ and plugging it in the system \eqref{nonlocal3}, the system becomes
 \[x'(\tau)=A(\tau)x(\tau)+F(\tau,x(\tau))+u'(\tau)-A(\tau)-F(\tau,u(\tau))\] with initial condition $x(0)-u(0)=h(x)-h(u)$.\\
 Choosing variable $z=x-u$ the equation system reduces to
 \begin{equation}
     \begin{aligned}
      z'(\tau)&=A(\tau)z(\tau)+F(\tau,x(\tau))-F(\tau,u(\tau))\\
      z(0)&=h(x)-h(u)
     \end{aligned}\label{nonlocal4}
 \end{equation}
 and problem of trajectory controllability of the system \eqref{nonlocal3} is reduced to the solvability of the system \eqref{nonlocal4} over the interval $[0,t_1)$.
 The mild solution of the system \eqref{nonlocal4} is given by:
 \begin{equation}
     z(\tau)=\mathcal{T}(\tau)[h(x)-h(u)]+\int^{\tau}_{0} \mathcal{T}(\tau-\zeta)[F(\zeta,x(\zeta))-F(\zeta,u(\zeta))]d\zeta \label{nonlocal5}
 \end{equation}
 where, $\mathcal{T}(\tau)$ is $C_0$ semigroup generated by linear operator $A$ satisfying $||\mathcal{T}(\tau)||\leq M$ for some positive number $M$.\\
 Therefore, 
 \begin{equation*}
     \begin{aligned}
      ||z(\tau)||&\leq ||\mathcal{T}(\tau)||||h(x)-h(u)||+\int^{\tau}_{0} ||\mathcal{T}(\tau-\zeta)||\ ||[F(\zeta,x(\zeta))-F(\zeta,u(\zeta))]||d\zeta\\
      &\leq Ml_h||x(\tau)-u(\tau)||+M \int^{\tau}_{0} l_F(r) ||x(\zeta)-u(\zeta)|| d\zeta\\
      &\leq Ml_h||z(\tau)||+ M \int^{\tau}_{0} l_F(r) ||z(\zeta)||d\zeta
     \end{aligned}
 \end{equation*}
 This implies
 \begin{equation*}
 ||z(\tau)||\leq \frac{Ml_F(r)}{1-Ml_h}\int^{\tau}_{0}||z(\zeta)||d\zeta 
 \end{equation*}
 Using Gronwall's inequality, we obtain $z(\tau)=0$ over the interval $[0,t_1)$. Hence, $x(\tau)=u(\tau)$ for all $\tau \in [0, t_1)$. Therefore, the system \eqref{nonlocal1} is trajectory controllable over the interval $[0,t_1)$.
 
 Over the interval $[t_1, s_1)$, the system becomes:
 \begin{equation}
 \begin{aligned}
   x'(\tau)&=A(\tau)x(\tau)+G(\tau,x(\tau))+W_2(\tau),\\
   x(t_1)&=u(t_1)
   \end{aligned}
   \label{nonlocal6}
 \end{equation}
 as the state of the system satisfies $x(t_k)=u(t_k)$.
  Plugging the control $W_2(\tau)=u'(\tau)-A(\tau)u(\tau)-G(\tau,x(\tau))$ over the interval $[t_1,s_1)$ in the system \eqref{nonlocal6} the system becomes:
 \[ x'(\tau)-u'(\tau)=G(\tau,x(\tau))-G(\tau,u(\tau))\]
 Choosing $z(\tau)=x(\tau)-u(\tau)$ we obtain
 \[ z'(\tau)= G(\tau,x(\tau))-G(\tau,u(\tau))\]
 as the value of the $z$ at $\tau=t_1$ is zero.
 Therefore, we have
 \begin{equation*}
 \begin{aligned}
||z(\tau)||&\leq \int^{\tau}_{t_1} ||\mathcal{T}(\tau-\zeta)||\ ||[G(\zeta,x(\zeta))-G(\zeta,u(\zeta))]||d\zeta\\
&\leq M \int^{\tau}_{t_1} l_G(r) ||x(\zeta)-u(\zeta)|| d\zeta\\
      &\leq M \int^{\tau}_{t_1} l_G(r) ||z(\zeta)||d\zeta
\end{aligned}
 \end{equation*}
 using (A3) and Gronwall's inequality we obtain $z(\tau)=0$ for all $\tau \in [t_1,s_1)$  Therefore, system \eqref{nonlocal1} is T-Controllable over the interval $[t_1,s_1)$. Moreover, $z(s_1)=0$ as $G$ is continuous. 
 
 Continuing this process over the interval $[s_k, t_{k+1})$ the system \eqref{nonlocal1} becomes:
 \begin{equation}
     \begin{aligned}
      x'(\tau)&=A(\tau)x(\tau)+F(\tau,x(\tau))+W_1(\tau)\\
      x(s_k)&=u(s_k) 
     \end{aligned}\label{nonlocal7}
 \end{equation}
 Choose the control over the interval $[s_k,t_{k+1})$ as:
 \[ W_1(\tau)=u'(\tau)-A(\tau)-F(\tau,u(\tau))\] and plugging it in the equation \eqref{nonlocal7} we get,
 \[x'(\tau)=A(\tau)x(\tau)+F(\tau,x(\tau))+u'(\tau)-A(\tau)-F(\tau,u(\tau))\]
 considering $z(\tau)=x(\tau)-u(\tau)$, above expression becomes:
 \begin{equation}
 \begin{aligned}
      z'(\tau)&=A(\tau)z(\tau)+F(\tau,x(\tau))-F(\tau,u(\tau))\\
      z(s_k)&=0,
     \end{aligned}\label{nonlocal8}
 \end{equation}
 Therefore
 \begin{equation*}
     \begin{aligned}
      ||z(\tau)||&\leq \int^{\tau}_{s_k} ||\mathcal{T}(\tau-\zeta)||\ ||[F(\zeta,x(\zeta))-F(\zeta,u(\zeta))]||d\zeta\\
      &\leq M \int^{\tau}_{0} l_F(r) ||x(\zeta)-u(\zeta)|| d\zeta\\
      &\leq M \int^{\tau}_{0} l_F(r) ||z(\zeta)||d\zeta
     \end{aligned}
 \end{equation*}
 and using Gronwall's inequality, we obtain $z(\tau)=0$ over the interval $[s_k,t_{k+1})$. Hence, $x(\tau)=u(\tau)$ for all $\tau \in [s_k,t_{k+1})$. Therefore, the system \eqref{nonlocal1}is T- controllable over the interval $[s_k,t_{k+1})$.
 
 Over the interval $[t_k,s_k)$ the system becomes:
 \begin{equation}
 \begin{aligned}
   x'(\tau)&=A(\tau)x(\tau)+G(\tau,x(\tau))+W_2(\tau),\\
   x(t_k)&=u(t_k)
   \end{aligned}
   \label{nonlocal9}
 \end{equation}
 Plugging the control $W_2(\tau)=u'(\tau)-A(\tau)u(\tau)-G(\tau,x(\tau))$ over the interval $[t_k,s_k)$ in the system \eqref{nonlocal9} the system becomes:
 \[ x'(\tau)-u'(\tau)=G(\tau,x(\tau))-G(\tau,u(\tau))\]
 Choosing $z(\tau)=x(\tau)-u(\tau)$ we obtain
 \[ z'(\tau)= G(\tau,x(\tau))-G(\tau,u(\tau))\]
 as the value of the $z$ at $\tau=t_k$ is zero.
 Therefore, we have
 \begin{equation*}
 \begin{aligned}
||z(\tau)||&\leq \int^{\tau}_{t_k} ||\mathcal{T}(\tau-\zeta)||\ ||[G(\zeta,x(\zeta))-G(\zeta,u(\zeta))]||d\zeta\\
&\leq M \int^{\tau}_{t_k} l_G(r) ||x(\zeta)-u(\zeta)|| d\zeta\\
      &\leq M \int^{\tau}_{t_k} l_G(r) ||z(\zeta)||d\zeta
\end{aligned}
 \end{equation*}
 using (A3) and Gronwall's inequality we obtain $z(\tau)=0$ for all $\tau \in [t_k,s_k)$  Therefore, system \eqref{nonlocal1} is T-Controllable over the interval $[t_k,s_k)$.
 
 Since, the system \eqref{nonlocal1} is T-controllable over the intervals $[0,t_1),[t_1,s_1), [s_k,t_{k+1})$ and $[t_k,s_k)$ for all $k$. Hence, the system is controllable over entire interval $[0,T_0]$. This completes the proof of the theorem.
  \end{proof}
 \begin{example}
 Let, $\mathcal{X}=L^2([0,\pi],\mathbb{R})$ and consider the system governed by non-instantaneous impulsive evolution equation:
 \begin{equation}
     \begin{aligned}
      \frac{\partial H(\tau,\psi)}{\partial \tau}&=\partial^{2}_{\psi} H(\tau,\psi)+F(\tau, H(\tau,\psi))+w_1(\tau,\psi)&\quad \tau \in [0,1/3)\cup [2/3,1], \\
      \frac{\partial H(\tau,\psi)}{\partial \tau}&=\partial^{2}_{\psi} H(\tau,\psi)+G(\tau,H(\tau,\psi))+w_2(\tau,\psi)&\quad \tau \in [1/3,2/3),\\
      H(\tau,0)&=0 \quad H(\tau, \pi)=0 & \quad \tau>0,\\
      H(0, \psi)&=H(\tau,\psi) \quad & \quad 0<\psi<\pi,
     \end{aligned}\label{nonexample1}
 \end{equation}
 over the interval $[0,1]$. Here, $H(\tau,\psi)$ is nonlocal opertor defined by $\sum^{n}_{i=1}\alpha_i H(\tau_i,\psi)$.
 
 Defining the operator on the space $\mathcal{X}$ as $A(\tau)=\partial^{2}_{\psi}$, $A(\tau)$ is the infinitesimal generator of the $C_0$ semigroup $\mathcal{T}(\tau)$. The representation of $\mathcal{T}(\tau)$ is \[\mathcal{T}(\tau)z=\sum^{\infty}_{m=0} \exp(\mu_m\tau)<z,\phi_m> \phi_m\]
 where, $\phi_m=\sqrt{2}sin(n\psi)$ for all $m=1,2,\cdots$ is the orthonormal basis corresponding to eigenvalue $\mu_m=-m^2$ of the operator $A$.
 
 With this concept the equation \eqref{nonexample1} can be rewritten as and abstract equation on the space $\mathcal{X}$ as
 \begin{equation}
    \begin{aligned}
     x'(\tau)&=A(\tau)+F(\tau,x)+W_1(\tau) &\quad  \tau \in [0,1/3)\cup [2/3,1],\\
     x(\tau)&=A(\tau)+G(\tau,x)+W_2(\tau) &\quad \tau \in [1/3,2/3),\\
     x(0)&=h(x), &\quad
     \end{aligned}\label{nonexample2}
 \end{equation}
 where, $x(\tau)=H(\tau,\cdot), W_1(\tau)=w_1(\tau,\psi), W_2(\tau)=w_1(\tau,\psi)$ and $h(x)=\sum^{n}_{i=1}\alpha_i x(\tau_i)$.
 The system \eqref{nonexample2} is trajectory controllable over the interval $[0,1]$ if the functions $F,G$ and $h$ satisfies the hypotheses of the theorem. 
 \end{example}
 \section{Conclusion}\label{conclusion}
 This article discussed the trajectory controllability of the system governed by non-instantaneous impulsive evolution equation having different perturbing force $F$ and $G$ with classical as well as non-local conditions on the Banach space. This type of systems are useful to many physical phenomena where only the perturbing forces were changes. Illustrations were also discussed to validate the derived results.


\begin{thebibliography}{99}
	\bibitem{Laxmi} V. Lakshmikantham, D. D. Bainov and P. S. Simeonov, Theory of Impulsive Differential Equations, World Scientific, Singapore, 1989.
	
	\bibitem{Samko} A. M. Samoilenko, N. A Perestyuk, Impulsive Differential Equations, World Scientific, Singapore, 1995.
	
	\bibitem{Agarwal} R. Agarwal, S. Hristova, D. O'Regan, Non-Instantaneous Impulses in Differential Equations, Springer International Publishing, Switzerland, 2017
	
	\bibitem{Bainov} D. D. Bainov and A. B. Dishliev, Population dynamics control in	regard to minimizing the time necessary for the regeneration of a biomass taken away from the population, Appl.Math.Comput. 39 (1990), 37–48.
	
	\bibitem{Erbe} L. H. Erbe, H. I. Freedman, X. Z. Liu and J. H. Wu, Comparison principles for impulsive parabolic equations with applications to models of single species growth, J. Austral. Math. Soc. 32(1991), 382–400.
	
    \bibitem{Kirane} M. Kirane and Yu. V. Rogovchenko, Comparison results for systems of impulse parabolic equations with applications to population dynamics, Nonlinear Anal. 28 (1997), 263–277.
    
    \bibitem{Joelianto} E. Joelianto, Linear impulsive differential equations for hybrid system modelling, in: 2003 European Control Conference(ECC), pp. 3335–3340, Cambridge, UK, 1–4 Sept. 2003.
    
    \bibitem{Dishlieva} K. Dishlieva, Impulsive differential equations and applications, J. Appl. Comput. Math. 2016 (2016), 1–6.
    
    \bibitem{Milman} V. Milman and A. Myshkis, On the stability of motion in the presence of impulses, Siberian Math. J. 1 (1996), 233–237.
    
    \bibitem{Rogovchenko} Y. V. Rogovchenko, Nonlinear impulse evolution systems and applications to population models, J. Math. Anal. App. 207 (1997), 300–315.

    \bibitem{Rogovchenko1} Y. Rogovchenko, Impulsive evolution systems: main results and new trends, Dynamics Contin. Diser. Impulsive Sys. 3(1997), 57–88.

    \bibitem{Liu} J. H. Liu, Nonlinear impulsive evolution equations, Dynamics Contin. Diser. Impulsive System 6 (1999), 77–85.

    \bibitem{Yang} H. Yang, Positive solutions for the initial value problems of impulsive evolution equations, J. Math. Res. Exposition 31(2011), 1047–1056.
    
    \bibitem{Xiang} X. Xiang, Y. Peng and W. Wei, A general class of nonlinear impulsive integral differential equations and optimal control on Banach space, Discrete Cont. Dynemical Syst. 2005 (2005), 911–919.
    
    \bibitem{Anguraj} A. Anguraj and M. M. Arjunan, Existence and uniqueness of mild and classical solutions of impulsive evolution equations, Ele. J. Diff. Eq. 111 (2005), 1–8.
    
    \bibitem{Zhang} X. Zhang, Solutions of semilinear evolution equations of mixed type in Banach spaces, Acta Anal. Funct. Appl. 11 (2009), 363–368.
    
    \bibitem{Sattayatham} P. Sattayatham, Strongly nonlinear impulsive evolution equations and optimal control, Nonlinear Anal. 57 (2004), 1005–1020.
    \bibitem{Liang} J. Liang, J. H. Liu and T.-J. Xiao, Nonlocal impulsive problems for nonlinear differential equations in Banach spaces, Math.v Comput. Modell. 49 (2009), 798–804.
    
    \bibitem{Fan} Z. Fan and G. Li, Existence results for semilinear differential equations with nonlocal and impulsive conditions, J. Funct. Anal. 258 (2010), 1709–1727.
    
    \bibitem{Wen} S. Wen and S. Ji, An existence theorem of impulsive differential equations with nonlocal conditions, Int. J. Nonlinear Sci. 14 (2012), 186–192.
    
    \bibitem{Tang} Q. Tang and J. J. Nieto, Variational approach to impulsive evolution equations, Appl. Math. Lett. 36 (2014), 31–35.
    
    \bibitem{Chen} P. Chen and J. Mu, Monotone iterative method for semilinear impulsive evolution equation of mixed type in Banach space, Electron. J. Differ. Eq. 2010 (2010), 1–13.
    
    \bibitem{Chen1} P. Chen and Y. Li, Mixed monotone iterative technique for a class of semilinear impulsive evolution equations in Banach spaces, Nonlinear Anal. 74 (2011), 3578–3588.
    
    \bibitem{Chen2} P. Chen, Y. Li and H. Yang, Perturbation method for nonlocal impulsive evolution equations, Nonlinear Anal. Hybrid Syst. 8(2013), 22–30.
    
    \bibitem{Cardinali} T. Cardinali and P. Rubbioni, Mild solutions for impulsive semilinear evolution differential equation, J. Appl. Funct. Anal. 1 (2006), 303–325.
    
    \bibitem{Shah} V. Shah, J. George, J. Sharama and P. Muthukumar, Existence and Uniqueness of Classical and Mild Solutions of Generalized Impulsive Evolution Equation,  Int. J. Nonlinear Sci. Numer. Simul. 19 (2018), 775–780.
    
    \bibitem{Hernandez} E. Hernandez, M. Pierri, D. O’Regan, On Abstract Differential Equations with Non instantaneous impulses, Topological Methods in Nonlinear Analysis, 46(2015), 1067–1088.
    
    \bibitem{Sood} A. Sood, S. Srivastava, On Stability of Differential Systems with Noninstantaneous Impulses, Mathematical Problems in Engineering, 2015 (2015), 1-5.
  \bibitem{Russel} D. L. Russell, Mathematics of Finite-dimensional Control Systems: Theory and Design,
    M. Dekker, New York, 1979.
    
    \bibitem{SED} E. D. Sontag, Deterministic Finite Dimensional Systems, Springer, 1998.
    
    \bibitem{BRW} R. W. Brockett, Finite Dimensional Linear Systems, John Wiley \& Sons, USA 1970.
    
    \bibitem{Joshi} M. C. Joshi, R. K. George, Controllability of Nonlinear Systems, Numerical Functional Analysis \& Optimization, 10(1989), 139-166.
    
    \bibitem{Klamka} J. Klamka, Schauder's fixed-point theorm in Nonlinear Controllability problems, Control and Cybernetics, 29(2000), 153-165.
    
    \bibitem{Li} Z. G. Li, Y. Wen and Y. C. Soh, Analysis and design of impulsive control, IEEE Trans. Autom. Control 46(2001), 894–897.
    
    \bibitem{George} R. K. George, Approximate controllability of non-autonomous semilinear systems, Nonlinear Anal., 24 (1995), 1377-1393.
    
    \bibitem{Klamka1} J. Klamka, A. Babiarz and M. Niezabitowski, Banach fixed-point theorem in semilinear controllability problems–a survey, Bull. Polish Acad. Sci. Tech. Sci., 64 (2016), 21-35. 
    
    \bibitem{Klamka2} J. Klamka, A. Babiarz and M. Niezabitowski, Schauder's fixed point theorem in approximate controllability problems, Int. J. Appl. Math. Comput. Sci., 26 (2016), 263-275. 
            
    \bibitem{Chen3} P. Chen, X. Zhang and Y. Li, Approximate controllability of non-autonomous evolution system with nonlocal conditions, J. Dyn. Control Syst., 26 (2020), 1-16. 
    
    \bibitem{George1} R. K. George, Trajectory controllability of 1-dimensional nonlinear systems, Proceedings of the Research Seminar in honour of Professor M.N. Vasavada , Anand, India: S.P. University 1996 (pp. 43–48). 
    
    \bibitem{Chalishajar} D. N. Chalishajar, R. K. George, A. K. Nandkumaran, F. S. Acharya, Trajectory controllability of nonlinear integro-differential system, Journal of The Franklin Institute, 347(2010), 1065-1075.
    
    \bibitem{Vishant} V. Shah, J. Sharma, P. H. Patel, Trajectory Controllability of Dynamical Systems with
    Non-instantaneous Impulses, 20(2021), 371-381.

\end{thebibliography}
\end{document}